\begin{document}

\def\fl#1{\left\lfloor#1\right\rfloor}
\def\cl#1{\left\lceil#1\right\rceil}
\def\ang#1{\left\langle#1\right\rangle}
\def\stf#1#2{\left[#1\atop#2\right]} 
\def\sts#1#2{\left\{#1\atop#2\right\}}
\def\eul#1#2{\left\langle#1\atop#2\right\rangle}
\def\N{\mathbb N}
\def\Z{\mathbb Z}
\def\R{\mathbb R}
\def\C{\mathbb C}

\newtheorem{theorem}{Theorem}
\newtheorem{Prop}{Proposition}
\newtheorem{Cor}{Corollary}
\newtheorem{Lem}{Lemma}
\newtheorem{Def}{Definition}  

\newenvironment{Rem}{\begin{trivlist} \item[\hskip \labelsep{\it
Remark.}]\setlength{\parindent}{0pt}}{\end{trivlist}}
\makeatletter
\renewenvironment{proof}[1][\proofname]{\par
  \normalfont
  \topsep6\p@\@plus6\p@ \trivlist
  \item[\hskip\labelsep{\bfseries #1}\@addpunct{\bfseries.}]\ignorespaces
}{%
  \endtrivlist
}
\renewcommand{\proofname}{\it Proof}
\makeatother

\title{Weighted Sylvester sums on the Frobenius set in more variables
}

\author{
Takao Komatsu\, and Yuan Zhang\\
\small Department of Mathematical Sciences, School of Science\\[-0.8ex]
\small Zhejiang Sci-Tech University\\[-0.8ex]
\small Hangzhou 310018 China\\[-0.8ex]
\small \texttt{komatsu@zstu.edu.cn}\, and \texttt{zjlgzy51@126.com}
}

\date{
\small MR Subject Classifications: Primary 11D07; Secondary 05A15, 05A17, 05A19, 11B68, 11D04, 11P81 
}

\maketitle
 
\begin{abstract} 
Let $a_1,a_2,\dots,a_k$ be positive integers with $\gcd(a_1,a_2,\dots,a_k)=1$. Let ${\rm NR}={\rm NR}(a_1,a_2,\dots,a_k)$ denote the set of positive integers nonrepresentable in terms of $a_1,a_2,\dots,a_k$. The largest nonrepresentable integer $\max{\rm NR}$, the number of nonrepresentable positive integers $\sum_{n\in{\rm NR}}1$ and the sum of nonrepresentable positive integers $\sum_{n\in{\rm NR}}n$ have been widely studied for a long time as related to the famous Frobenius problem.   
In this paper by using Eulerian numbers, we give formulas for the weighted sum $\sum_{n\in{\rm NR}}\lambda^{n}n^\mu$, where $\mu$ is a nonnegative integer and $\lambda$ is a complex number. We also examine power sums of nonrepresentable numbers and some formulae for three variables. Several examples illustrate and support our results. 
\\
{\bf Keywords:} Frobenius problem, weighted sums, Sylvester sums, Eulerian numbers      
\end{abstract}

\section{Introduction}  

Given positive integers $a_1,\dots,a_k$ with $\gcd(a_1,\dots,a_k)=1$, it is well-known that all sufficiently large $n$ can be represented as a nonnegative integer combination of $a_1,\dots,a_k$. 
The {\it Frobenius Problem} is to determine the largest positive integer that is NOT representable as a nonnegative integer combination of given positive integers that are coprime (see \cite{ra05} for general references). This number is denoted by $g(a_1,\dots,a_k)$ and often called Frobenius number. The Frobenius Problem has been also known as the {\it Coin Exchange Problem} (or Postage Stamp Problem / Chicken McNugget Problem), which has a long history and is one of the problems that has attracted many people as well as experts. 

Let $n(a_1,\dots,a_k)$ be the number of positive integers with no nonnegative integer representation by $a_1,\dots,a_k$. It is sometimes called Sylvester number.  

According to Sylvester, for positive integers $a$ and $b$ with $\gcd(a,b)=1$, we have  
\begin{align*}
g(a,b)&=(a-1)(b-1)-1\quad{\rm \cite{sy1884}}\,,\\
n(a,b)&=\frac{1}{2}(a-1)(b-1)\quad{\rm \cite{sy1882}}\,. 
\end{align*}

There are many kinds of problems related to the Frobenius problem. The problems for the number of solutions (e.g., \cite{tr00}) and the sum of integer powers of values the gaps in numerical semigroups (e.g., \cite{bs93,fr07,fks}) are some of the interesting ones. 
One of other famous problems is about the so-called {\it Sylvester sums} 
$$
s(a_1,\dots,a_k)=\sum_{n\in{\rm NR}(a_1,\dots,a_k)}n 
$$ 
(see, e.g., \cite[\S 5.5]{ra05}, \cite{tu06} and references therein), where ${\rm NR}(a_1,\dots,a_k)$ denotes the set of positive integers without nonnegative integer representation by $a_1,\dots,a_k$. In addition, we denote by ${\rm R}(a_1,\dots,a_k)$ the set of positive integers with nonnegative integers representation by $a_1,\dots,a_k$. 
Brown and Shiue \cite{bs93} found the exact value of $s(a,b)$ for positive integers $a$ and $b$ with $\gcd(a,b)=1$,  
\begin{equation}
s(a,b)=\frac{1}{12}(a-1)(b-1)(2 a b-a-b-1)\,. 
\label{brown}
\end{equation} 
R\o dseth \cite{ro94} generalized Brown and Shiue's result by giving a closed form for 
$$
s_\mu(a,b):=\sum_{n\in{\rm NR}(a,b)}n^\mu\,,   
$$ 
where $\mu$ is a positive integer.  

When $k=2$, there exist beautiful closed forms for Frobenius numbers, Sylvester numbers and Sylvester sums, but 
when $k\ge 3$, exact determination of these numbers is difficult.  
The Frobenius number cannot be given by closed formulas of a certain type (Curtis \cite{cu90}). The problem to determine $g(a_1,\dots,a_k)$ is NP-hard under Turing reduction (see, e.g., Ram\'irez Alfons\'in \cite{ra05}). Nevertheless, the Frobenius number for some special cases are calculated (e.g., \cite{op08,ro56,se77}). One convenient formula is by Johnson \cite{jo60}. One analytic approach to the Frobenius number can be seen in \cite{bgk01,ko03}.    

Obtaining closed forms for general case are hopeless for $k\ge 3$. Several formulae for Frobenius numbers, Sylvester numbers and Sylvester sums have been considered under special cases.  For example, some formulas for the Frobenius number in three variables can be seen in \cite{tr17}. 

In fact, by introducing the Ap\'ery set below, it is easy to obtain the formulas for $g(A)$, $n(A)$ and $s(A)$. Without loss of generality, we assume that $a_1=\min(A)$. 

\begin{Def}  
For positive integers $A=\{a_1,a_2,\dots,a_k\}$ with $\gcd(A)=1$ and $a_1=\min(A)$ we denote by 
$$
{\rm Ape}(A)={\rm Ape}(a_1,a_2,\dots,a_k)=\{m_0,m_1,\dots,m_{a_1-1}\}\,, 
$$ 
the Ap\'ery set of $A$, where $m_i$ is the least positive integer of $R(A)$ satisfying $m_i\equiv i\pmod{a_1}$ $(1\le i\le a_1-1)$. 
Note that $m_0$ is defined to be $0$.  
\label{apery} 
\end{Def}

\begin{Lem}  
We have 
\begin{align*}
g(A)&=\left(\max_{1\le i\le a_1-1}m_i\right)-a_1\,,\quad{\rm \cite{bs62}}\\ 
n(A)&=\frac{1}{a_1}\sum_{i=1}^{a_1-1}m_i-\frac{a_1-1}{2}\,,\quad{\rm \cite{se77}}\\ 
s(A)&=\frac{1}{2 a_1}\sum_{i=1}^{a_1-1}m_i^2-\frac{1}{2}\sum_{i=1}^{a_1-1}m_i+\frac{a_1^2-1}{12}\,.\quad{\rm \cite{tr08}}
\end{align*}
\label{lem1} 
\end{Lem}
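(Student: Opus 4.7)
\medskip

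\noindent\textbf{Proof plan for Lemma \ref{lem1}.}
The plan is to exploit a single structural observation about the $m_i$: for each residue $i$ with $1\le i\le a_1-1$, the set of \emph{nonrepresentable} positive integers congruent to $i\pmod{a_1}$ is exactly the arithmetic progression
\[
i,\; i+a_1,\; i+2a_1,\;\dots,\; m_i-a_1.
\]
Indeed, if $n\in{\rm R}(a_1,\dots,a_k)$ then $n+a_1\in{\rm R}(a_1,\dots,a_k)$ (add one more copy of $a_1$ to the representation), so ${\rm R}$ is closed under addition by $a_1$; combined with the minimality of $m_i$ in its class, a positive $n\equiv i\pmod{a_1}$ is nonrepresentable iff $n<m_i$. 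In particular this progression has exactly $(m_i-i)/a_1$ terms and last term $m_i-a_1$, with $i\le a_1-1\le m_i-a_1$ when $a_1-1\ge 1$ (and the empty-class case $m_i=i$ is harmless).

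From here the three formulas are immediate bookkeeping. For $g$, the largest nonrepresentable integer is $\max_{1\le i\le a_1-1}(m_i-a_1)=(\max_i m_i)-a_1$. For $n$, summing the cardinalities over residue classes gives
\[
n(A)=\sum_{i=1}^{a_1-1}\frac{m_i-i}{a_1}=\frac{1}{a_1}\sum_{i=1}^{a_1-1}m_i-\frac{1}{a_1}\cdot\frac{(a_1-1)a_1}{2},
\]
which is the Selmer formula. For $s$, the arithmetic-progression sum in class $i$ is $\tfrac{k_i}{2}(i+m_i-a_1)$ with $k_i=(m_i-i)/a_1$, i.e.\ $(m_i-i)(m_i+i-a_1)/(2a_1)$. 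Expanding,
\[
s(A)=\frac{1}{2a_1}\sum_{i=1}^{a_1-1}m_i^2-\frac{1}{2}\sum_{i=1}^{a_1-1}m_i\;+\;\left(\frac{(a_1-1)a_1}{4}-\frac{(a_1-1)(2a_1-1)}{12}\right),
\]
and the parenthesized constant collapses to $(a_1-1)(a_1+1)/12=(a_1^2-1)/12$, giving the Tripathi form.

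The only ``step'' that requires justification is the structural observation; the rest is elementary closed-form evaluation of an arithmetic progression and of $\sum i$, $\sum i^2$. So there is no genuine obstacle — the proof is essentially one idea (the Apéry-set decomposition modulo $a_1$) followed by routine algebra. I would present it in exactly that order: (i) state and verify the closure of ${\rm R}$ under $+a_1$; (ii) deduce the description of ${\rm NR}$ class by class; (iii) derive the three formulas in turn, as they share the same setup and differ only in which moment ($0$th, $1$st, or $2$nd) of the elements of each progression one sums.
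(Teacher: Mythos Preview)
Your proof is correct. The paper does not actually prove Lemma~\ref{lem1}; it simply cites \cite{bs62,se77,tr08} for the three formulas. Your argument via the Ap\'ery-set decomposition modulo $a_1$ is precisely the structural observation the paper itself introduces (in the paragraph preceding Theorem~\ref{th-hh}) and uses throughout, so your approach is fully in line with the paper's framework and with the standard proofs in the cited references.
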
 
Note that the third formula appeared with a typo in \cite{tr08}, and it has been corrected in \cite{pu18,tr17b}. 
\bigskip 

In this paper, we are interesting in giving a formula for the weighted sum 
$$
s_\mu^{(\lambda)}(A):=\sum_{n\in{\rm NR}(A)}\lambda^{n}n^\mu\,,
$$  
where $\lambda\in\mathbb C$. More precisely, if $\lambda\ne 1$ and $\lambda^{a_1}\ne1$, then  
\begin{align*}  
s_\mu^{(\lambda)}(A)&=\sum_{n=0}^\mu\frac{(-a_1)^n}{(\lambda^{a_1}-1)^{n+1}}\binom{\mu}{n}\sum_{j=0}^n\eul{n}{n-j}\lambda^{j a_1}\sum_{i=0}^{a_1-1}m_i^{\mu-n}\lambda^{m_i}\\
&\quad +\frac{(-1)^{\mu+1}}{(\lambda-1)^{\mu+1}}\sum_{j=0}^\mu\eul{\mu}{\mu-j}\lambda^j 
\end{align*}
(see Theorem \ref{th-hh} below).  Here, the Eulerian number $\eul{n}{l}$ is the number of permutations of $1,2,\dots,n$ in which exactly $l$ elements are greater than the previous element (see, e.g., \cite[\S 6.2]{gkp89},\cite[A008292]{oeis}). 
If $\lambda=1$, then 
$$ 
s_\mu^{(1)}(A)=\sum_{\kappa=0}^\mu\sum_{j=1}^{\kappa+1}\binom{\mu}{\kappa}\binom{\kappa+1}{j}\frac{(-1)^{j-1}}{\kappa+1}a_1^{\kappa-j}B_{\kappa-j+1}\sum_{i=1}^{a_1-1}(m_i-i)^j m_i^{\mu-\kappa}\,,  
$$ 
where $B_n$ is the Bernoulli number (see Theorem \ref{th-h1} below).

When $k=2$, the general formula for weighted sums is given in \cite{KZ0}.  In fact, the weighted power sums can be expressed in terms of the Apostol-Bernoulli numbers.  
When $k>2$, no general closed form for the functions $g$, $n$ and $s$ have been discovered. However, for some special cases or conditional cases, convenient forms have been known. Whenever an additional condition $a|{\rm lcm}(b,c)$ holds, the following results are known in \cite{tr17b}. 

\begin{Lem}
Set $l_1:={\rm lcm}(a,b)$ and $l_2:={\rm lcm}(a,c)$. Then, 
\begin{align*}  
g(a,b,c)&=l_1+l_2-(a+b+c)\,,\\
n(a,b,c)&=\frac{1}{2}\bigl(l_1+l_2-(a+b+c)+1\bigr)\,,\\
s(a,b,c)&=\frac{1}{12}\bigl(a^2+b^2+c^2+3(a b c+a b+b c+c a)\\
&\qquad -3(a+b+c)(l_1+l_2)+2(l_1^2+l_2^2)-1\bigr)\,. 
\end{align*}
\label{lem2}
\end{Lem}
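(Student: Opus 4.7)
The plan is to invoke Lemma \ref{lem1} with $a_1=a$, $a_2=b$, $a_3=c$, so the task reduces to identifying the residue-class minima $m_0,m_1,\dots,m_{a-1}$ explicitly under the hypothesis $a\mid{\rm lcm}(b,c)$, and then evaluating $\max_i m_i$, $\sum_i m_i$, and $\sum_i m_i^2$. As a preliminary observation, combining $a\mid{\rm lcm}(b,c)$ with $\gcd(a,b,c)=1$ forces the factorization $a=d_1 d_2$ with $\gcd(d_1,d_2)=1$, where $d_1:=\gcd(a,b)$ and $d_2:=\gcd(a,c)$; a short case analysis on $p$-adic valuations (using that for each prime one of $v_p(a),v_p(b),v_p(c)$ vanishes) yields $v_p(a)=\min(v_p(a),v_p(b))+\min(v_p(a),v_p(c))$. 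In particular $l_1/b=d_2$ and $l_2/c=d_1$.

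Next I would claim that the $a$ numbers
\[
S:=\{\,j b+k c:0\le j<d_2,\ 0\le k<d_1\,\}
\]
constitute exactly $\{m_0,m_1,\dots,m_{a-1}\}$. Writing $b=d_1 b'$ and $c=d_2 c'$, one checks $\gcd(b',d_2)=\gcd(c',d_1)=1$; a congruence $jb+kc\equiv j'b+k'c\pmod a$ reduced modulo $d_1$ and modulo $d_2$ separately then forces $j=j'$ and $k=k'$, so since $|S|=d_1 d_2=a$ the induced map $S\to\Z/a\Z$ is a bijection. For minimality within each residue class, any element $N=xa+yb+zc$ of the numerical semigroup $\langle a,b,c\rangle$ can be rewritten, by reducing $y$ modulo $d_2$ and $z$ modulo $d_1$ and absorbing the resulting multiples of $l_1=d_2 a$ and $l_2=d_1 a$ into the $xa$-term, as $N=Xa+y_0 b+z_0 c$ with $(y_0,z_0)$ in the defining range of $S$ and $X\ge 0$; hence $N\ge y_0 b+z_0 c$, and the unique element of $S$ in the residue class of $N$ is indeed the minimum.

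With the $m_i$ pinned down, the three formulas of Lemma \ref{lem1} reduce to direct sum evaluations. Since $\max_i m_i=(d_2-1)b+(d_1-1)c=l_1+l_2-b-c$, the formula $g(a,b,c)=l_1+l_2-(a+b+c)$ is immediate. For $n$,
\[
\sum_{i=0}^{a-1}m_i=b d_1\cdot\frac{d_2(d_2-1)}{2}+c d_2\cdot\frac{d_1(d_1-1)}{2}=\frac{a}{2}(l_1+l_2-b-c),
\]
which inserted into Lemma \ref{lem1} yields the stated $n(a,b,c)$. For $s$ one expands $\sum_{j,k}(jb+kc)^2$ via the standard formulas for $\sum j$, $\sum j^2$, $\sum k$, $\sum k^2$, simplifies using $b(d_2-1)=l_1-b$ and $c(d_1-1)=l_2-c$, and substitutes into $\frac{1}{2a}\sum m_i^2-\frac{1}{2}\sum m_i+\frac{a^2-1}{12}$; a bulky but routine rearrangement recovers the displayed closed form.

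The main obstacle is the Ap\'ery-set identification carried out in the second paragraph: pinning down the correct rectangle $[0,d_2)\times[0,d_1)$ and certifying both the bijectivity onto $\Z/a\Z$ and the minimality within each residue class. Once these are in place, the closed forms for $g(a,b,c)$, $n(a,b,c)$, and $s(a,b,c)$ emerge from purely mechanical algebra.
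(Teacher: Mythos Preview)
Your proof sketch is correct. Note, however, that the paper does not actually prove Lemma~\ref{lem2}: it is quoted verbatim from Tripathi~\cite{tr17b} as a known result. What you have supplied is a self-contained derivation via Lemma~\ref{lem1}, and your key step---the identification of the Ap\'ery set $\{m_i:0\le i\le a-1\}$ with the rectangle $\{jb+kc:0\le j<d_2,\ 0\le k<d_1\}$---is precisely the fact the paper invokes (without justification) at the start of the proof of Theorem~\ref{th3}, in the notation $r=d_1$, $s=d_2$. So your argument is in the same spirit as the paper's later computations, and it fills in a step the paper leaves to the cited reference.
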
 

With the same condition $a|{\rm lcm}(b,c)$, we obtain the explicit formula 
\begin{align*}
&s^{(\lambda)}(a,b,c)\\
&=\frac{l_1(\lambda^{l_2}-1)+l_2(\lambda^{l_1}-1)+(l_1+l_2-a-b-c)(\lambda^{l_1}-1)(\lambda^{l_2}-1)}{(\lambda^a-1)(\lambda^b-1)(\lambda^c-1)}\\
&\quad -\frac{(\lambda^{l_1}-1)(\lambda^{l_2}-1)}{(\lambda^a-1)(\lambda^b-1)(\lambda^c-1)}\left(\frac{a}{\lambda^a-1}+\frac{b}{\lambda^b-1}+\frac{c}{\lambda^c-1}\right)\\ 
&\quad +\frac{\lambda}{(\lambda-1)^2}\,,   
\end{align*}
where  $\lambda\ne 1$ with $\lambda^{a}\ne1$, $\lambda^{b}\ne1$ and $\lambda^{c}\ne1$ (see Theorem \ref{th3} below).

\section{Main results}  

In this section, we consider the power sums of nonrepresentable numbers 
$$
s_\mu^{(\lambda)}(A):=\sum_{n\in{\rm NR}(A)}\lambda^n n^\mu\,,   
$$ 
where $\mu$ is a positive integer, and $A:=\{a_1,\dots,a_k\}$ is the set of positive integers with $\gcd(a_1,\dots,a_k)=1$.   

First, assume that $\lambda^{a_1}\ne 1$.

Eulerian numbers $\eul{n}{m}$ appear in the generating function
$$
\sum_{k=0}^\infty k^n x^k=\frac{1}{(1-x)^{n+1}}\sum_{m=0}^{n-1}\eul{n}{m}x^{m+1}\quad(n\ge 1)
$$ 
with $0^0=1$ and $\eul{0}{0}=1$ (\cite[p.244]{com74}), 
and have an explicit formula 
$$
\eul{n}{m}=\sum_{k=0}^{m+1}(-1)^k\binom{n+1}{k}(m-k+1)^n
$$   
(\cite[p.243]{com74}).  From the definition, we have 
$$
\eul{n}{k}=\eul{n}{n-k-1}\quad(0\le k\le n-1),
$$
in particular, we have
$$
\eul{n}{0}=\eul{n}{n-1}=1\quad\hbox{and}\quad\eul{n}{n}=0\quad(n\ge 1)\,.
$$

For $1\le i\le a_1-1$, let $m_i$ be defined in Definition \ref{apery}. Then, $m_i-a_1$ is the largest positive integer in ${\rm NR}(A)$ among those congruent to $i$ modulo $a_1$. Hence, there exists a nonnegative integer $\ell_i$ such that 
$$
m_i-a_1, m_i-2 a_1,\dots, m_i-\ell_i a_1\in{\rm NR}(A)
$$ 
with $m_i-\ell_i a_1>0$ and $m_i-(\ell_i+1)a_1<0$.  
We want to obtain 
\begin{equation}  
s_\mu^{(\lambda)}(A)=\sum_{n\in{\rm NR}(A)}\lambda^n n^\mu
=\sum_{i=0}^{a_1-1}\sum_{j=1}^{\ell_i}\lambda^{m_i-j a_1}(m_i-j a_1)^\mu\,. 
\label{eq:sml}
\end{equation}
Since $m_i\equiv i\pmod{a_1}$ and $1\le i\le a_1-1$, we get $\ell_i=(m_i-i)/a_1$. 
Let $\lambda\ne 1$ and $\lambda^{a_1}\ne1$. 
Then, the weighted sum of elements in ${\rm NR}(a_1,a_2,\dots,a_k)$ congruent to $i$ modulo $a_1$ is given by 
$$
\sum_{j=1}^{\ell_i}\lambda^{m_i-j a_1}(m_i-j a_1)^\mu\,. 
$$ 
For $n\ge 1$, we obtain 
\begin{align*} 
\sum_{j=1}^\infty\lambda^{m_i-j a_1}j^n&=\frac{\lambda^{m_i}}{(1-\lambda^{-a_1})^{n+1}}\sum_{h=0}^{n-1}\eul{n}{h}\lambda^{-(h+1)a_1}\\
&=\frac{\lambda^{m_i}}{(\lambda^{a_1}-1)^{n+1}}\sum_{h=0}^{n-1}\eul{n}{h}\lambda^{(n-h)a_1}\\
&=\frac{\lambda^{m_i}}{(\lambda^{a_1}-1)^{n+1}}\sum_{h=1}^{n}\eul{n}{n-h}\lambda^{h a_1}\,.  
\end{align*}
So, for $n\ge 0$, we get 
$$
\sum_{j=1}^\infty\lambda^{m_i-j a_1}j^n=\frac{\lambda^{m_i}}{(\lambda^{a_1}-1)^{n+1}}\sum_{h=0}^{n}\eul{n}{n-h}\lambda^{h a_1}\,. 
$$ 
Hence, for $\ell_i=(m_i-i)/a_1$
\begin{align*}
s_\mu^{(\lambda)}(A)&=\sum_{i=0}^{a_1-1}\sum_{j=1}^{\ell_i}\lambda^{m_i-j a_1}(m_i-j a_1)^\mu\\
&=\sum_{i=0}^{a_1-1}\sum_{j=1}^{\ell_i}\lambda^{m_i-j a_1}\sum_{n=0}^\mu\binom{\mu}{n}m_i^{\mu-n}j^n(-a_1)^n\\ 
&=\sum_{i=0}^{a_1-1}\sum_{n=0}^\mu\binom{\mu}{n}m_i^{\mu-n}(-a_1)^n\sum_{j=1}^\infty\lambda^{m_i-j a_1}j^n\\
&\quad -\sum_{i=0}^{a_1-1}\sum_{n=0}^\mu\binom{\mu}{n}m_i^{\mu-n}(-a_1)^n\sum_{j=\ell_i+1}^\infty\lambda^{m_i-j a_1}j^n\\
&=\sum_{i=0}^{a_1-1}\sum_{n=1}^\mu\binom{\mu}{n}m_i^{\mu-n}(-a_1)^n\sum_{j=1}^\infty\lambda^{m_i-j a_1}j^n\\
&\quad +\sum_{i=0}^{a_1-1}m_i^\mu\sum_{j=1}^\infty\lambda^{m_i-j a_1}\\
&\quad -\sum_{i=0}^{a_1-1}\sum_{n=0}^\mu\binom{\mu}{n}m_i^{\mu-n}(-a_1)^n\sum_{j=1}^\infty\lambda^{i-j a_1}\left(j+\frac{m_i-i}{a_1}\right)^n\\ 
&=\sum_{i=0}^{a_1-1}\sum_{n=1}^\mu\binom{\mu}{n}m_i^{\mu-n}(-a_1)^n\frac{\lambda^{m_i}}{(\lambda^{a_1}-1)^{n+1}}\sum_{h=1}^{n}\eul{n}{n-h}\lambda^{h a_1}\\
&\quad +\sum_{i=0}^{a_1-1}m_i^\mu\frac{\lambda^{m_i}}{\lambda^{a_1}-1}\\
&\quad -\sum_{i=0}^{a_1-1}\sum_{n=0}^\mu\binom{\mu}{n}m_i^{\mu-n}(-a_1)^n\sum_{j=1}^\infty\lambda^{i-j a_1}\left(j+\frac{m_i-i}{a_1}\right)^n\\ 
&=\sum_{n=0}^\mu\frac{(-a_1)^n}{(\lambda^{a_1}-1)^{n+1}}\binom{\mu}{n}\sum_{h=0}^n\eul{n}{n-h}\lambda^{h a_1}\sum_{i=0}^{a_1-1}m_i^{\mu-n}\lambda^{m_i}\\
&\quad -\sum_{i=0}^{a_1-1}\sum_{j=1}^\infty\lambda^{i-j a_1}(i-j a_1)^\mu\,. 
\end{align*}
Since the last term is equal to 
\begin{align*}
-\sum_{k=0}^\infty\lambda^{-k}(-k)^\mu&=\frac{(-1)^{\mu+1}}{(1-\lambda^{-1})^{\mu+1}}\sum_{m=0}^{\mu-1}\eul{\mu}{m}\lambda^{-(m+1)}\\ 
&=\frac{(-1)^{\mu+1}}{(\lambda-1)^{\mu+1}}\sum_{j=0}^\mu\eul{\mu}{\mu-j}\lambda^j\,, 
\end{align*}
we have the desired result.  

\begin{theorem}
For $1\le i\le a_1-1$, let $m_i$ be in the Ap\'ery set in Definition \ref{apery}. 
Assume that $\lambda\ne 1$ and $\lambda^{a_1}\ne 1$. Then for a positive integer $\mu$,  
\begin{align*}  
s_\mu^{(\lambda)}(A)&=\sum_{n=0}^\mu\frac{(-a_1)^n}{(\lambda^{a_1}-1)^{n+1}}\binom{\mu}{n}\sum_{j=0}^n\eul{n}{n-j}\lambda^{j a_1}\sum_{i=0}^{a_1-1}m_i^{\mu-n}\lambda^{m_i}\\
&\quad +\frac{(-1)^{\mu+1}}{(\lambda-1)^{\mu+1}}\sum_{j=0}^\mu\eul{\mu}{\mu-j}\lambda^j\,.
\end{align*}
\label{th-hh}
\end{theorem}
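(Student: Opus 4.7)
The plan is to partition $\mathrm{NR}(A)$ by residue class modulo $a_1$, exploiting the Ap\'ery-type structure supplied by $m_0,m_1,\dots,m_{a_1-1}$. Writing $a=a_1$, for each $i$ with $1\le i\le a-1$, the nonrepresentable integers congruent to $i\pmod a$ are exactly $m_i-a,m_i-2a,\dots,m_i-\ell_i a$, where $\ell_i=(m_i-i)/a$. Accordingly I would start from
\[
s_\mu^{(\lambda)}(A)=\sum_{i=0}^{a-1}\sum_{j=1}^{\ell_i}\lambda^{m_i-ja}(m_i-ja)^\mu,
\]
the $i=0$ contribution being empty since $\ell_0=0$.

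Next I would expand $(m_i-ja)^\mu$ by the binomial theorem to isolate the pure power $j^n$, then rewrite each finite inner sum as $\sum_{j=1}^\infty-\sum_{j=\ell_i+1}^\infty$. The $\sum_{j=1}^\infty$ half is summed by substituting $x=\lambda^{-a}$ in the Eulerian generating function recalled just above the theorem and reindexing $h\mapsto n-h$, producing
\[
\sum_{j=1}^\infty\lambda^{m_i-ja}j^n=\frac{\lambda^{m_i}}{(\lambda^a-1)^{n+1}}\sum_{h=0}^n\eul{n}{n-h}\lambda^{ha}.
\]
Plugging this back and interchanging the order of summation already yields the first of the two sums in the theorem statement.

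For the tail $\sum_{i=0}^{a-1}\sum_{n=0}^\mu\binom{\mu}{n}m_i^{\mu-n}(-a)^n\sum_{j=\ell_i+1}^\infty\lambda^{m_i-ja}j^n$, I would shift the inner index by $j\mapsto j+\ell_i$, so that $\lambda^{m_i-ja}$ becomes $\lambda^{i-ja}$ and $j^n$ becomes $(j+(m_i-i)/a)^n$. Absorbing $(-a)^n$ and applying the binomial identity in reverse collapses the $n$-sum to $(i-ja)^\mu$, so the whole tail reassembles as $-\sum_{i=0}^{a-1}\sum_{j=1}^\infty\lambda^{i-ja}(i-ja)^\mu$. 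The map $(i,j)\mapsto k=ja-i$ is a bijection with $k\ge 1$, so this equals $-\sum_{k=1}^\infty\lambda^{-k}(-k)^\mu$; a second application of the Eulerian generating function with $x=\lambda^{-1}$ turns it into $\tfrac{(-1)^{\mu+1}}{(\lambda-1)^{\mu+1}}\sum_{j=0}^\mu\eul{\mu}{\mu-j}\lambda^j$.

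The main obstacle I expect is exactly the tail bookkeeping: one must verify that after the shift $j\mapsto j+\ell_i$ the binomial expansion really does re-collapse in reverse, so that every dependence on $m_i$ in the tail cancels and the $a$ separate tails glue into a single sum over $k\ge 1$ that no longer sees the Ap\'ery data. Once that cancellation is checked, combining the two Eulerian evaluations produces the displayed formula.
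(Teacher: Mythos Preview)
Your proposal is correct and follows essentially the same route as the paper: partition by residues modulo $a_1$, binomially expand $(m_i-ja)^\mu$, split each finite inner sum as $\sum_{j\ge 1}-\sum_{j\ge\ell_i+1}$, evaluate the full sums via the Eulerian generating function with $x=\lambda^{-a}$, shift the tails by $j\mapsto j+\ell_i$ and re-collapse the binomial so that all $m_i$-dependence drops out, then glue the $a_1$ tails into $-\sum_{k\ge 1}\lambda^{-k}(-k)^\mu$ and apply the Eulerian identity once more with $x=\lambda^{-1}$. The ``main obstacle'' you flag---checking that after the shift the binomial sum re-collapses to $(i-ja)^\mu$---is exactly the step the paper carries out, and your verification plan for it is the right one.
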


Using $\mu=2$ in the formula of Theorem \ref{th-hh}, we obtain the following.   

\begin{theorem}  
If $\lambda\ne 1$ and $\lambda^{a_1}\ne 1$, then 
\begin{align*}  
&s_2^{(\lambda)}(A)\\
&=\frac{1}{\lambda^{a_1}-1}\sum_{i=0}^{a_1-1}m_i^2\lambda^{m_i}-\frac{2 a_1\lambda^{a_1}}{(\lambda^{a_1}-1)^2}\sum_{i=0}^{a_1-1}m_i\lambda^{m_i}+\frac{a_1^2\lambda^{a_1}(\lambda^{a_1}+1)}{(\lambda^{a_1}-1)^3}\sum_{i=0}^{a_1-1}\lambda^{m_i}\\
&\quad -\frac{\lambda(\lambda+1)}{(\lambda-1)^3}\,.
\end{align*}
\label{th-h2}
\end{theorem}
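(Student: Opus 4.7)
The plan is to obtain Theorem \ref{th-h2} by direct specialization of Theorem \ref{th-hh} at $\mu=2$. No new idea is needed; the work reduces to evaluating a handful of small Eulerian numbers and collecting three outer-sum contributions together with the single boundary term.

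First I would record the Eulerian values required, all of which follow at once from the explicit formula given in the text: $\eul{0}{0}=1$, $\eul{1}{0}=1$ with $\eul{1}{1}=0$, and $\eul{2}{0}=\eul{2}{1}=1$ with $\eul{2}{2}=0$.

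Next I would expand the outer sum $\sum_{n=0}^{2}$ appearing in Theorem \ref{th-hh}. The $n=0$ piece is simply $\frac{1}{\lambda^{a_1}-1}\sum_{i=0}^{a_1-1}m_i^{2}\lambda^{m_i}$. For $n=1$, the binomial coefficient is $2$ and the inner sum $\sum_{j=0}^{1}\eul{1}{1-j}\lambda^{j a_1}$ collapses to $\eul{1}{0}\lambda^{a_1}=\lambda^{a_1}$ because $\eul{1}{1}=0$; combining with the prefactor $-a_1/(\lambda^{a_1}-1)^{2}$ yields $-\frac{2a_1\lambda^{a_1}}{(\lambda^{a_1}-1)^{2}}\sum_{i=0}^{a_1-1}m_i\lambda^{m_i}$. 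For $n=2$, the inner sum becomes $\eul{2}{1}\lambda^{a_1}+\eul{2}{0}\lambda^{2a_1}=\lambda^{a_1}(1+\lambda^{a_1})$, and with the prefactor $a_1^{2}/(\lambda^{a_1}-1)^{3}$ this gives precisely the third asserted term.

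Finally, the boundary term simplifies as $\frac{-1}{(\lambda-1)^{3}}\sum_{j=0}^{2}\eul{2}{2-j}\lambda^{j}=\frac{-(\lambda+\lambda^{2})}{(\lambda-1)^{3}}=-\frac{\lambda(\lambda+1)}{(\lambda-1)^{3}}$. Adding the four pieces reproduces the stated identity. The only bookkeeping point to watch is that the inner Eulerian sum is indexed as $\eul{n}{n-j}$, so the $j=0$ summand vanishes for $n\ge 1$ because $\eul{n}{n}=0$; once that vanishing is tracked the derivation is entirely mechanical and there is no real obstacle.
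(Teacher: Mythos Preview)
Your proposal is correct and matches the paper's own approach exactly: the paper derives Theorem~\ref{th-h2} simply by specializing Theorem~\ref{th-hh} at $\mu=2$, and your Eulerian-number computations for $n=0,1,2$ and for the boundary term are all accurate.
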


When $\mu=1$ in the formula of Theorem \ref{th-hh}, we obtain the following.

\begin{theorem} 
If $\lambda\ne 1$ and $\lambda^{a_1}\ne1$, then  
\begin{align*}  
&s^{(\lambda)}(A):=s_1^{(\lambda)}(A)\\
&=\frac{1}{\lambda^{a_1}-1}\sum_{i=0}^{a_1-1}m_i\lambda^{m_i}
-\frac{a_1\lambda^{a_1}}{(\lambda^{a_1}-1)^2}\sum_{i=0}^{a_1-1}\lambda^{m_i}+\frac{\lambda}{(\lambda-1)^2}\,.
\end{align*} 
\label{th1}
\end{theorem}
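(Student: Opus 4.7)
The plan is to specialize Theorem \ref{th-hh} to $\mu=1$; no genuine new computation is required beyond identifying the handful of Eulerian numbers that appear.

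First I would record the needed values. From the symmetry $\eul{n}{k}=\eul{n}{n-k-1}$ together with $\eul{n}{n}=0$ for $n\ge 1$ and the initial value $\eul{0}{0}=1$ quoted just before the proof of Theorem \ref{th-hh}, we get $\eul{0}{0}=1$, $\eul{1}{0}=1$, and $\eul{1}{1}=0$. This is all the Eulerian data the proof consumes.

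Next I would substitute $\mu=1$ into the formula of Theorem \ref{th-hh} and handle the two surviving values of $n$ separately. For $n=0$, the factor $(-a_1)^0/(\lambda^{a_1}-1)$, the binomial $\binom{1}{0}=1$, and the inner sum $\eul{0}{0}\lambda^{0}=1$ combine to give exactly
\[
\frac{1}{\lambda^{a_1}-1}\sum_{i=0}^{a_1-1}m_i\lambda^{m_i}.
\]
For $n=1$, only the $j=1$ term of $\sum_{j=0}^{1}\eul{1}{1-j}\lambda^{j a_1}$ survives because $\eul{1}{1}=0$, leaving $\eul{1}{0}\lambda^{a_1}=\lambda^{a_1}$; multiplying by $\binom{1}{1}(-a_1)/(\lambda^{a_1}-1)^2$ and by $\sum_{i=0}^{a_1-1}m_i^{0}\lambda^{m_i}=\sum_{i=0}^{a_1-1}\lambda^{m_i}$ yields
\[
-\frac{a_1\lambda^{a_1}}{(\lambda^{a_1}-1)^2}\sum_{i=0}^{a_1-1}\lambda^{m_i}.
\]

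Finally, the tail term $\dfrac{(-1)^{\mu+1}}{(\lambda-1)^{\mu+1}}\sum_{j=0}^{\mu}\eul{\mu}{\mu-j}\lambda^{j}$ specializes, by the same reasoning on $\eul{1}{0}$ and $\eul{1}{1}$, to $\lambda/(\lambda-1)^2$. Collecting the three contributions gives the claimed identity. The only potential obstacle is a careful reading of the edge case $\eul{1}{1}=0$ so as not to accidentally include a spurious $j=0$ term in the $n=1$ sum, but once that is noted the verification is immediate.
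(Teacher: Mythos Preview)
Your proof is correct and follows exactly the paper's approach: the paper simply states that Theorem~\ref{th1} is the specialization of Theorem~\ref{th-hh} to $\mu=1$, and you have carried out that specialization explicitly and accurately, including the correct handling of the Eulerian values $\eul{0}{0}=1$, $\eul{1}{0}=1$, $\eul{1}{1}=0$.
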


If $\lambda\ne 1$ and $\lambda^{a_1}=\lambda^{a_2}=\cdots=\lambda^{a_k}=1$, then $\gcd(A)\ne 1$. 
Nevertheless, if $\lambda\ne 1$ and $\lambda^{a_1}=1$, then we have the following.   

\begin{theorem} 
If $\lambda\ne 0,1$ and $\lambda^{a_1}=1$, then  
$$
s^{(\lambda)}(A)=\frac{1}{2 a_1}\sum_{i=0}^{a_1-1}m_i^2\lambda^i-\frac{1}{2}\sum_{i=0}^{a_1-1}m_i\lambda^i+\frac{\lambda}{(\lambda-1)^2}\,.
$$ 
\label{th1b}
\end{theorem}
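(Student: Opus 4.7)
The plan is to mimic the structure of the proof of Theorem \ref{th-hh}, but to exploit the fact that $\lambda^{a_1}=1$ collapses the inner geometric series into an arithmetic one. Write $a=a_1$ for brevity. For each $i\in\{1,2,\dots,a-1\}$, the nonrepresentable integers congruent to $i$ modulo $a$ are precisely $i,\,i+a,\,i+2a,\dots,m_i-a$, which is an arithmetic progression with $\ell_i=(m_i-i)/a$ terms. Because $\lambda^a=1$, every one of these terms carries the same weight $\lambda^{i+ja}=\lambda^i$, so the inner sum is purely arithmetic rather than geometric.

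The first step is therefore just arithmetic bookkeeping: evaluate
\begin{align*}
s^{(\lambda)}(A)&=\sum_{i=1}^{a-1}\lambda^i\sum_{j=0}^{\ell_i-1}(i+j a)=\sum_{i=1}^{a-1}\lambda^i\cdot\frac{(m_i-i)(m_i+i-a)}{2 a}\\
&=\frac{1}{2 a}\sum_{i=1}^{a-1}m_i^2\lambda^i-\frac{1}{2}\sum_{i=1}^{a-1}m_i\lambda^i+\frac{1}{2}\sum_{i=1}^{a-1}i\lambda^i-\frac{1}{2 a}\sum_{i=1}^{a-1}i^2\lambda^i\,.
\end{align*}
The first two terms are exactly what appears in the claimed identity, so all that remains is the ``free'' contribution
$$
T(\lambda):=\frac{1}{2}\sum_{i=1}^{a-1}i\lambda^i-\frac{1}{2 a}\sum_{i=1}^{a-1}i^2\lambda^i\,,
$$
and the goal is to show $T(\lambda)=\lambda/(\lambda-1)^2$ whenever $\lambda^a=1$, $\lambda\ne 1$.

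For this the main tool I will use is the identity $\sum_{i=0}^{a-1}\lambda^i=(\lambda^a-1)/(\lambda-1)=0$. Multiplying $S_1:=\sum_{i=0}^{a-1}i\lambda^i$ and $S_2:=\sum_{i=0}^{a-1}i^2\lambda^i$ by $(\lambda-1)$ and telescoping, one obtains
\begin{align*}
(\lambda-1)S_1&=(a-1)\lambda^a+1-\sum_{i=0}^{a-1}\lambda^i\,,\\
(\lambda-1)S_2&=(a-1)^2\lambda^a-2 S_1+\sum_{i=1}^{a-1}\lambda^i\,.
\end{align*}
Substituting $\lambda^a=1$ and $\sum_{i=0}^{a-1}\lambda^i=0$ gives $S_1=a/(\lambda-1)$ and then $S_2=a(a-2)/(\lambda-1)-2 a/(\lambda-1)^2$. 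Inserting into $T(\lambda)$ the $a$-dependent pieces cancel cleanly, yielding $T(\lambda)=1/(\lambda-1)+1/(\lambda-1)^2=\lambda/(\lambda-1)^2$, which completes the proof.

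The derivation is entirely elementary; the only mildly delicate step is the cancellation in $T(\lambda)$, where the ``wrong'' $1/(\lambda-1)$ pieces arising from $S_1$ and $S_2$ must combine to exactly $1/(\lambda-1)$ and the higher-order $1/(\lambda-1)^2$ piece must survive with the right sign. Tracking signs carefully in the telescoping identity for $S_2$ (where $(j-1)^2-j^2=-(2j-1)$) is the one place where a slip would break the argument.
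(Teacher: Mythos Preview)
Your proof is correct and follows essentially the same route as the paper. The only cosmetic difference is that you parametrize the nonrepresentable integers in each residue class as $i,i+a,\dots,m_i-a$ (summing an arithmetic progression directly), whereas the paper writes them as $m_i-a,m_i-2a,\dots$ and expands $m_i\ell-a\ell(\ell+1)/2$; both collapse to the same four sums, and your telescoping evaluation of $S_1,S_2$ is exactly the computation the paper does in the penultimate display of its proof (just stated there without derivation).
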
  
\begin{proof} 
Since $\lambda^{a_1}=1$, the weighted sum of elements in ${\rm R}(A)$ congruent to $i$ modulo $a_1$ is given by 
\begin{align*}
&\sum_{j=1}^{\ell_i}\lambda^{m_i-j a_1}(m_i-j a_1)=m_i\sum_{j=1}^{\ell_i}\lambda^i-a_1\sum_{j=1}^{\ell_i}\lambda^i j\\
&=m_i\ell_i\lambda^i-a_1\frac{\ell_i(\ell_i+1)}{2}\lambda^i\\
&=\frac{m_i(m_i-i)\lambda^i}{a_1}-\frac{(m_i-i)^2\lambda^i}{2 a_1}-\frac{(m_i-i)\lambda^i}{2}\,. 
\end{align*}
Therefore, 
\begin{align*}  
&s^{(\lambda)}(A)\\
&=\frac{1}{a_1}\left(\sum_{i=0}^{a_1-1}m_i^2\lambda^i-\sum_{i=0}^{a_1-1}i m_i\lambda^i\right)\\
&\quad -\frac{1}{2 a_1}\left(\sum_{i=0}^{a_1-1}m_i^2\lambda^i-2\sum_{i=0}^{a_1-1}i m_i\lambda^i+\sum_{i=0}^{a_1-1}i^2\lambda^i\right)\\
&\quad -\frac{1}{2}\left(\sum_{i=0}^{a_1-1}m_i\lambda^i-\sum_{i=0}^{a_1-1}i\lambda^i\right)\\
&=\frac{1}{2 a_1}\sum_{i=0}^{a_1-1}m_i^2\lambda^i-\frac{1}{2}\sum_{i=0}^{a_1-1}m_i\lambda^i\\
&\quad -\frac{1}{2 a_1}\frac{a_1^2(\lambda-1)-2 a_1\lambda}{(\lambda-1)^2}+\frac{1}{2}\frac{a_1}{\lambda-1}\\ 
&=\frac{1}{2 a_1}\sum_{i=0}^{a_1-1}m_i^2\lambda^i-\frac{1}{2}\sum_{i=0}^{a_1-1}m_i\lambda^i+\frac{\lambda}{(\lambda-1)^2}\,. \qquad\qquad\qquad{\atop\qed} 
\end{align*}
\end{proof}
\bigskip 

In particular, when $\lambda=-1$ and $a_1$ is odd in Theorem \ref{th1}, 
we have the formula for alternate sums. When $k=2$ and $A=\{a,b\}$, the formulas are obtained in terms of Bernoulli or Euler numbers in \cite{wang08}. 

\begin{Cor}  
When $a_1$ is odd, we have 
$$
\sum_{n\in{\rm NR}(A)}(-1)^n n=-\frac{1}{2}\sum_{i=0}^{a_1-1}(-1)^{m_i}m_i+\frac{a_1}{4}\sum_{i=0}^{a_1-1}(-1)^{m_i}+\frac{a_1-1}{4}\,.
$$ 
\label{cor1}
\end{Cor}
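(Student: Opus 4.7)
The plan is to obtain this as a direct specialization of Theorem \ref{th1} with $\lambda = -1$. Since $a_1$ is odd, we have $\lambda^{a_1} = (-1)^{a_1} = -1 \ne 1$, so the hypothesis $\lambda \ne 0$ and $\lambda^{a_1} \ne 1$ is satisfied, and Theorem \ref{th1} applies.

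Next I would compute the three coefficients that appear in Theorem \ref{th1} at $\lambda = -1$. Using $\lambda^{a_1} - 1 = -2$, $(\lambda^{a_1}-1)^2 = 4$, and $\lambda^{a_1} = -1$, the coefficients become
\[
\frac{1}{\lambda^{a_1}-1} = -\frac{1}{2},\qquad -\frac{a_1\lambda^{a_1}}{(\lambda^{a_1}-1)^2} = \frac{a_1}{4},\qquad \frac{\lambda}{(\lambda-1)^2} = -\frac{1}{4}.
\]
Plugging into Theorem \ref{th1} yields
\[
\sum_{n\in{\rm NR}(A)}(-1)^n n = -\frac{1}{2}\sum_{i=0}^{a_1-1} m_i (-1)^{m_i} + \frac{a_1}{4}\sum_{i=0}^{a_1-1} (-1)^{m_i} - \frac{1}{4}.
\]

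The final step is to separate out the $i=0$ term from each sum. Because $m_0 = 0$, the $i=0$ contribution to $\sum m_i(-1)^{m_i}$ is $0$, while the $i=0$ contribution to $\sum (-1)^{m_i}$ is $1$. Moving this $1$ out gives a constant term $\frac{a_1}{4} - \frac{1}{4} = \frac{a_1-1}{4}$, and the remaining sums run from $i=1$ to $a_1-1$, which is exactly the claimed identity.

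This is essentially bookkeeping, so there is no substantive obstacle; the only minor subtlety to keep track of is the handling of the $i=0$ term, which is what produces the constant $\frac{a_1-1}{4}$ at the end. No further calculation is needed.
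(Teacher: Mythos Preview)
Your proof is correct and follows exactly the approach the paper intends: the corollary is stated immediately after Theorem~\ref{th1} as the specialization $\lambda=-1$ with $a_1$ odd, and your coefficient computations and extraction of the $i=0$ term (using $m_0=0$) are precisely what is needed to obtain the stated form.
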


When $k=2$ in Theorem \ref{th1}, since we have $\{m_i|1\le i\le a-1\}=\{b i|1\le i\le a-1\}$, 
if $\lambda^b\ne 1$ and $\lambda\ne 1$, then we have 
\begin{align*}  
&s^{(\lambda)}(a,b)\\
&=\frac{1}{\lambda^{a}-1}\sum_{i=0}^{a-1} b i\lambda^{b i}
-\frac{a\lambda^{a}}{(\lambda^{a}-1)^2}\left(1+\sum_{i=0}^{a-1}\lambda^{b i}\right)+\frac{\lambda}{(\lambda-1)^2}\\
&=\frac{1}{\lambda^{a}-1}\frac{b\bigl((a-1)\lambda^{a b+b}-a\lambda^{a b}+\lambda^{b}\bigr)}{(\lambda^b-1)^2}
-\frac{a\lambda^{a}}{(\lambda^{a}-1)^2}\left(1+\frac{\lambda^{a b}-\lambda^b}{\lambda^b-1}\right)\\
&\quad +\frac{\lambda}{(\lambda-1)^2}\\
&=\frac{\lambda}{(\lambda-1)^2}+\frac{a b \lambda^{a b}}{(\lambda^a-1)(\lambda^b-1)}-\frac{(\lambda^{a b}-1)\bigl((a+b)\lambda^{a+b}-a \lambda^a-b \lambda^b\bigr)}{(\lambda^a-1)^2(\lambda^b-1)^2}\,.
\end{align*}

If $\lambda\ne 1$ and $\lambda^b=1$, we have 
\begin{align}  
&s^{(\lambda)}(a,b)\notag\\
&=\frac{1}{\lambda^{a}-1}\sum_{i=0}^{a-1} b i
-\frac{a\lambda^{a}}{(\lambda^{a}-1)^2}\left(1+\sum_{i=0}^{a-1}1\right)+\frac{\lambda}{(\lambda-1)^2}\notag\\
&=\frac{b}{\lambda^{a}-1}\frac{a(a-1)}{2}-\frac{a^2\lambda^{a}}{(\lambda^{a}-1)^2}+\frac{\lambda}{(\lambda-1)^2}\notag\\
&=\frac{\lambda}{(\lambda-1)^2}+\frac{(a-1)a b}{2(\lambda^a-1)}-\frac{a^2\lambda^a}{(\lambda^a-1)^2}\,.
\label{eq:201}
\end{align}

\subsection{Examples}  

Consider the case $\mu=2$, $k=4$ and $(a_1,a_2,a_3,a_4)=(5,17,19,23)$.  
Since 
\begin{align*}  
&{\rm R}(5,17,19,23)\\
&=\{5,10,15,17,19,20,22,23,24,25,27,28,29,30,32,33,34,35,36,37,\dots\}\,,\\
&{\rm NR}(5,17,19,23)=\{1,2,3,4,6,7,8,9,11,12,13,14,16,18,21,26,31\}\,,
\end{align*}
we see that $m_1=36$, $m_2=17$, $m_3=23$ and $m_4=19$. By Theorem \ref{th-h2}, 
\begin{align*}
&s_2^{(\lambda)}(5,17,19,23)\\
&=\frac{1}{\lambda^{5}-1}(36^2\lambda^{36}+17^2\lambda^{17}+23^2\lambda^{23}+19^2\lambda^{19})\\
&\quad -\frac{10\lambda^{5}}{(\lambda^{5}-1)^2}(36\lambda^{36}+17\lambda^{17}+23\lambda^{23}+19\lambda^{19})\\
&\quad +\frac{25\lambda^{5}(\lambda^{5}+1)}{(\lambda^{5}-1)^3}(1+\lambda^{36}+\lambda^{17}+\lambda^{23}+\lambda^{19})\\
&\quad -\frac{\lambda(\lambda+1)}{(\lambda-1)^3}\,.
\end{align*}
The alternating sum, which is a typical weighted sum studied in \cite{wang08}, is the case $\lambda=-1$. The weight $\lambda$ does not have to be limited to real numbers. 
For example, we have 
\begin{align*}
&s_2^{(-1)}(5,17,19,23)=(-1)^1\cdot 1^2+(-1)^2\cdot 2^2+(-1)^3\cdot 3^2+(-1)^4\cdot 4^2\\
&\quad +(-1)^6\cdot 6^2+(-1)^7\cdot 7^2+(-1)^8\cdot 8^2+(-1)^9\cdot 9^2\\
&\quad +(-1)^{11}\cdot 11^2+(-1)^{12}\cdot 12^2+(-1)^{13}\cdot 13^2+(-1)^{14}\cdot 14^2\\
&\quad +(-1)^{16}\cdot 16^2+(-1)^{18}\cdot 18^2+(-1)^{21}\cdot 21^2+(-1)^{26}\cdot 26^2\\
&\quad +(-1)^{31}\cdot 31^2\\
&=-116\,,\\
&s_2^{(2)}(5,17,19,23)=2110129433818\,,\\
&s_2^{(\omega)}(5,17,19,23)=(-443+391\sqrt{-3})/2\,,
\end{align*}
where $\omega=(-1+\sqrt{-3})/2$. 
\bigskip 

Let $\mu=1$, $k=3$ and $(a_1,a_2,a_3)=(3,11,17)$.  Since  
\begin{align*}
{\rm R}(3,11,17)&=\{0,3,6,9,11,12,14,15,17,18,20,21,\dots\}\,,\\
{\rm NR}(3,11,17)&=\{1,2,4,5,7,8,10,13,16,19\}\,,
\end{align*} 
we have $m_1=22$ and $m_2=11$. Thus, elements of ${\rm NR}(3,11,17)$ which are congruent to $1$ resp. $2$ are $\{1,4,7,10,13,16,19\}$ and $\{2,5,8\}$. 
Our results also hold for negative numbers other than $-1$. 
When the weight is $\lambda=-2$, the weighted sum is equal to 
\begin{align*}
&s^{(-2)}(3,11,17)\\
&=(-2)^1\cdot 1+(-2)^2\cdot 2+(-2)^4\cdot 4+(-2)^5\cdot 5+(-2)^7\cdot 7+(-2)^8\cdot 8\\
&\quad +(-2)^{10}\cdot 10+(-2)^{13}\cdot 13+(-2)^{16}\cdot 16+(-2)^{19}\cdot 19\\ 
&=-9008090\,. 
\end{align*} 
On the other hand, from the equation of $s^{(\lambda)}=s_1^{(\lambda)}$ given in (\ref{eq:sml}), it can be calculated as 
\begin{align*}
&s^{(-2)}(3,11,17)\\
&=\bigl((-2)^1\cdot 1+(-2)^4\cdot 4+(-2)^7\cdot 7+(-2)^{10}\cdot 10+(-2)^{13}\cdot 13+(-2)^{16}\cdot 16\\
&\quad +(-2)^{19}\cdot 19\bigr)
+\bigl((-2)^{2}\cdot 2+(-2)^{5}\cdot 5+(-2)^{8}\cdot 8\bigr)\\ 
&=-9008090\,. 
\end{align*} 
By Theorem \ref{th1} directly, from $m_1=22$ and $m_2=11$, we have 
\begin{align*}
&s^{(-2)}(3,11,17)\\
&=\frac{1}{(-2)^{3}-1}\sum_{i=0}^{2}m_i(-2)^{m_i}
-\frac{3(-2)^{3}}{((-2)^{3}-1)^2}\left(1+\sum_{i=0}^{2}(-2)^{m_i}\right)+\frac{-2}{(-3)^2}\\
&=-\frac{1}{9}\bigl(22(-2)^{22}+11(-2)^{11}\bigr)+\frac{8}{27}\bigl(1+(-2)^{22}+(-2)^{11}\bigr)-\frac{2}{9}\\
&=-9008090\,. 
\end{align*}

\section{Power sums of nonrepresentable numbers}

Next, we give a summation formula for $\lambda=1$. We recall that Bernoulli numbers $B_n$ are defined by 
$$
\frac{x}{e^x-1}=\sum_{n=0}^\infty B_n\frac{x^n}{n!}\,.
$$  

\begin{theorem}  
For $1\le i\le a_1-1$, let $m_i$ be defined in Definition \ref{apery}. 
$$ 
s_\mu^{(1)}(A)=\sum_{\kappa=0}^\mu\sum_{j=1}^{\kappa+1}\binom{\mu}{\kappa}\binom{\kappa+1}{j}\frac{(-1)^{j-1}}{\kappa+1}a_1^{\kappa-j}B_{\kappa-j+1}\sum_{i=0}^{a_1-1}(m_i-i)^j m_i^{\mu-\kappa}\,.  
$$ 
\label{th-h1}
\end{theorem}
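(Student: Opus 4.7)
The plan is to mimic the derivation of Theorem~\ref{th-hh} but with the weighted geometric sums $\sum_j \lambda^{m_i-ja}j^n$ replaced by pure power sums $\sum_j j^\kappa$, which are handled by Faulhaber's formula. As a first step, partition ${\rm NR}(A)$ by residue class modulo $a_1$. For each $i$ with $1\le i\le a_1-1$, the argument already given in the excerpt shows that the nonrepresentable positive integers congruent to $i\pmod{a_1}$ are precisely $m_i-a_1,m_i-2a_1,\dots,m_i-\ell_i a_1$, where $\ell_i=(m_i-i)/a_1$; the class $i=0$ contributes nothing since $m_0=0$ and $0\notin{\rm NR}$. Hence
$$
s_\mu^{(1)}(A)=\sum_{i=1}^{a_1-1}\sum_{j=1}^{\ell_i}(m_i-j a_1)^\mu.
$$

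Next, I would expand $(m_i-ja_1)^\mu$ by the binomial theorem and interchange sums to obtain
$$
s_\mu^{(1)}(A)=\sum_{i=1}^{a_1-1}\sum_{\kappa=0}^{\mu}\binom{\mu}{\kappa}m_i^{\mu-\kappa}(-a_1)^\kappa\sum_{j=1}^{\ell_i}j^\kappa.
$$
The inner sum $\sum_{j=1}^{\ell}j^\kappa$ can be rewritten using Faulhaber's identity
$$
\sum_{j=1}^{\ell}j^\kappa=\frac{1}{\kappa+1}\sum_{p=0}^{\kappa}\binom{\kappa+1}{p}(-1)^p B_p\,\ell^{\kappa+1-p},
$$
where $B_p$ are the Bernoulli numbers with $B_1=-1/2$, as determined by the generating function stated before the theorem. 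The factor $(-1)^p$ converts the standard Faulhaber coefficients (which use the $B_1=1/2$ convention) into this convention; the $\kappa=0$ case reduces to $\ell=\ell$ and remains consistent with the formula because $B_0=1$.

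Finally, substitute $\ell_i=(m_i-i)/a_1$ so that $\ell_i^{\kappa+1-p}=(m_i-i)^{\kappa+1-p}a_1^{-(\kappa+1-p)}$, reindex by setting $j=\kappa+1-p$, and combine the powers of $(-1)$ and $a_1$. The sign collapses via $(-1)^\kappa(-1)^{\kappa+1-j}=(-1)^{1-j}=(-1)^{j-1}$, and the exponent of $a_1$ becomes $\kappa-j$, while $B_p$ becomes $B_{\kappa-j+1}$. Pulling the sum over $i$ to the inside yields exactly the claimed expression. The only nontrivial step is keeping the sign and Bernoulli-index bookkeeping straight through the reindexing; once the conventions are pinned down, the remainder is routine symbolic manipulation.
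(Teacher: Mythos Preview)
Your proposal is correct and follows essentially the same route as the paper: partition ${\rm NR}(A)$ into residue classes modulo $a_1$, expand $(m_i-j a_1)^\mu$ by the binomial theorem, evaluate $\sum_{j=1}^{\ell}j^\kappa$ via Faulhaber's formula, substitute $\ell=(m_i-i)/a_1$, and reindex. The only cosmetic difference is that the paper writes Faulhaber's identity directly with the index $h$ (your $j$) rather than with $p$ and a subsequent reindexing, so your sign/convention check is a slight elaboration of what the paper leaves implicit.
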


Using $\mu=1$ in Theorem \ref{th-h1}, we obtain the third formula in Lemma \ref{lem1}. 
Using $\mu=1$ and $k=2$, that is, $m_i=b i$ ($1\le i\le a-1$), we retrieve the formula (\ref{brown}).

\begin{proof}[{\it Proof of Theorem \ref{th-h1}}.]   
First observe 
$$
\sum_{j=1}^{\ell_i} j^\kappa=\frac{1}{\kappa+1}\sum_{h=1}^{\kappa+1}\binom{\kappa+1}{h}(-1)^{\kappa-h+1}B_{\kappa-h+1}\ell_i^h  
$$ 
(see, e.g., \cite[(1.1)]{AIK}\footnote{In this book, Bernoulli numbers $\mathbb B_n$ are defined by $x/(1-e^{-x})=\sum_{n=0}^\infty\mathbb B_n\frac{x^n}{n!}$, satisfying $\mathbb B_n=(-1)^n B_n$ ($n\ge 0$)}). 
Since $\ell_i=(m_i-i)/a_1$, the power sum of elements in ${\rm R}(A)$ congruent to $i$ modulo $a_1$ is 
\begin{align*}  
&\sum_{j=1}^{\ell_i}(m_i-j a_1)^\mu\\
&=\sum_{j=1}^{\ell_i}\sum_{\kappa=0}^\mu\binom{\mu}{\kappa}m_i^{\mu-\kappa}(-j a_1)^\kappa\\ 
&=\sum_{\kappa=0}^\mu\binom{\mu}{\kappa}m_i^{\mu-\kappa}(-1)^\kappa a_1^\kappa\sum_{j=1}^{\ell_i} j^\kappa\\ 
&=\sum_{\kappa=0}^\mu\binom{\mu}{\kappa}m_i^{\mu-\kappa}(-1)^\kappa a_1^\kappa\frac{1}{\kappa+1}\sum_{h=1}^{\kappa+1}\binom{\kappa+1}{h}(-1)^{\kappa-h+1}B_{\kappa-h+1}\ell_i^h\\
&=\sum_{\kappa=0}^\mu\sum_{h=1}^{\kappa+1}\binom{\mu}{\kappa}\binom{\kappa+1}{h}\frac{(-1)^{h-1}}{\kappa+1}a_1^{\kappa-h}B_{\kappa-h+1}(m_i-i)^h m_i^{\mu-\kappa}\,. 
\end{align*}
Hence, we have 
\begin{align*}  
&s_\mu^{(1)}(A)\\
&=\sum_{i=0}^{a_1-1}\sum_{j=1}^{\ell_i}(m_i-j a_1)^\mu\\
&=\sum_{\kappa=0}^\mu\sum_{j=1}^{\kappa+1}\binom{\mu}{\kappa}\binom{\kappa+1}{j}\frac{(-1)^{j-1}}{\kappa+1}a_1^{\kappa-j}B_{\kappa-j+1}\sum_{i=0}^{a_1-1}(m_i-i)^j m_i^{\mu-\kappa}\,. \qquad{\atop\qed}   
\end{align*}
\end{proof}

\subsection{Example}   

For $k=3$ and $(a_1,a_2,a_3)=(3,11,17)$, we have $m_1=22$ and $m_2=11$ and hence 
\begin{align*}  
&s_\mu^{(1)}(3,11,17)\\
&=\sum_{\kappa=0}^\mu\sum_{j=1}^{\kappa+1}\binom{\mu}{\kappa}\binom{\kappa+1}{j}\frac{(-1)^{j-1}}{\kappa+1}3^{\kappa-j}B_{\kappa-j+1} 
(21^j\cdot 22^{\mu-\kappa}+9^j\cdot 11^{\mu-\kappa})\,.
\end{align*}
From the equation of $s_\mu^{(\lambda)}$ given in (\ref{eq:sml}), we have 
$$
s_\mu^{(1)}(3,11,17)=
1^\mu+2^\mu+4^\mu+5^\mu+7^\mu+8^\mu+10^\mu+13^\mu+16^\mu+19^\mu\,.
$$

\section{Three variables} 

When $k>2$, there is no closed form for general $g(A)$, $n(A)$ and $s(A)$. But for some specific cases, some closed forms are known. In \cite{tr17}, closed forms for $g(a,b,c)$, $n(a,b,c)$ and $s(a,b,c)$ are given under the condition $a|{\rm lcm}(b,c)$.  
In this section, we give a closed form for weighted sums 
$$
s^{(\lambda)}(a,b,c):=\sum_{n\in{\rm NR}(a,b,c)}\lambda^n n
$$ 
under an additional condition $a|{\rm lcm}(b,c)$.

For convenience, put $r=\gcd(a,b)$ and $s=\gcd(a,c)$. Then $b s={\rm lcm}(a,b):=l_1$ and $c r={\rm lcm}(a,c):=l_2$.

\begin{theorem}  
Assume that positive integers $a,b,c$ satisfy $\gcd(a,b,c)=1$ and $a|{\rm lcm}(b,c)$.  
If $\lambda\ne 1$ with $\lambda^{a}\ne1$, $\lambda^{b}\ne1$ and $\lambda^{c}\ne1$, then 
\begin{align*}
&s^{(\lambda)}(a,b,c)\\
&=\frac{l_1(\lambda^{l_2}-1)+l_2(\lambda^{l_1}-1)+(l_1+l_2-a-b-c)(\lambda^{l_1}-1)(\lambda^{l_2}-1)}{(\lambda^a-1)(\lambda^b-1)(\lambda^c-1)}\\
&\quad -\frac{(\lambda^{l_1}-1)(\lambda^{l_2}-1)}{(\lambda^a-1)(\lambda^b-1)(\lambda^c-1)}\left(\frac{a}{\lambda^a-1}+\frac{b}{\lambda^b-1}+\frac{c}{\lambda^c-1}\right)\\ 
&\quad +\frac{\lambda}{(\lambda-1)^2}\,. 
\end{align*}
\label{th3}
\end{theorem}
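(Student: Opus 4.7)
The plan is to apply Theorem~\ref{th1} with $(a_1,a_2,a_3)=(a,b,c)$, which reduces the problem to evaluating the two Ap\'ery-type sums
\[
S_0:=\sum_{i=0}^{a-1}\lambda^{m_i}\qquad\text{and}\qquad S_1:=\sum_{i=0}^{a-1}m_i\lambda^{m_i}
\]
under the structural hypothesis $a\mid\mathrm{lcm}(b,c)$. The payoff of this hypothesis, which I will establish first, is that the Ap\'ery set $\{m_0,\dots,m_{a-1}\}$ is exactly the rectangle $\{jb+kc:0\le j<s,\ 0\le k<r\}$, so both $S_0$ and $S_1$ factorize as products of one-dimensional sums.

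Establishing this structural lemma is the hard part. I will first check that $\gcd(r,s)=1$ (because $\gcd(r,s)\mid\gcd(a,b,c)=1$) and that $a=rs$ (because $rs\mid a$ as two coprime divisors, while $a\mid\mathrm{lcm}(b,c)$ together with $\gcd(a/r,b/r)=1$ forces $a/r\mid c$, whence $a/r\mid s$). Then $\langle b\rangle$ and $\langle c\rangle$ inside $\mathbb{Z}/a\mathbb{Z}$ are the subgroups consisting of multiples of $r$ and of $s$ respectively, so their intersection consists of multiples of $\mathrm{lcm}(r,s)=rs=a$, i.e.\ is trivial. Hence each residue $i\pmod a$ has a unique representative $jb+kc$ with $0\le j<s$, $0\le k<r$. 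Finally, since $sb=l_1$ and $rc=l_2$ are both multiples of $a$, any nonnegative representation $\alpha a+\beta b+\gamma c$ of an element in the residue class of $i$, written with $\beta=qs+j$ and $\gamma=q'r+k$, equals $\alpha a+ql_1+q'l_2+(jb+kc)\ge jb+kc$, giving $m_i=jb+kc$.

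With the structural lemma in hand, the bijection $(j,k)\mapsto jb+kc$ factorizes
\[
S_0=\Bigl(\sum_{j=0}^{s-1}\lambda^{jb}\Bigr)\Bigl(\sum_{k=0}^{r-1}\lambda^{kc}\Bigr)=\frac{(\lambda^{l_1}-1)(\lambda^{l_2}-1)}{(\lambda^b-1)(\lambda^c-1)},
\]
and $S_1$ splits as $\bigl(\sum_j jb\,\lambda^{jb}\bigr)\bigl(\sum_k\lambda^{kc}\bigr)+\bigl(\sum_j\lambda^{jb}\bigr)\bigl(\sum_k kc\,\lambda^{kc}\bigr)$. Differentiating the geometric series yields $\sum_{j=0}^{s-1}jb\,\lambda^{jb}=\frac{l_1\lambda^{l_1}}{\lambda^b-1}-\frac{b\lambda^b(\lambda^{l_1}-1)}{(\lambda^b-1)^2}$, and symmetrically for the sum over $k$ with $(b,s,l_1)$ replaced by $(c,r,l_2)$.

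To finish, I will substitute these into Theorem~\ref{th1}, put everything over the common denominator $(\lambda^a-1)(\lambda^b-1)(\lambda^c-1)$, and use the identity $\frac{x\lambda^x}{\lambda^x-1}=x+\frac{x}{\lambda^x-1}$ for $x\in\{a,b,c\}$ to split off the constant $a+b+c$ from the bracket $\frac{a}{\lambda^a-1}+\frac{b}{\lambda^b-1}+\frac{c}{\lambda^c-1}$ that appears in the target formula. The remaining numerator identity $l_1\lambda^{l_1}(\lambda^{l_2}-1)+l_2\lambda^{l_2}(\lambda^{l_1}-1)=l_1(\lambda^{l_2}-1)+l_2(\lambda^{l_1}-1)+(l_1+l_2)(\lambda^{l_1}-1)(\lambda^{l_2}-1)$ follows at once from $x\lambda^x-x=x(\lambda^x-1)$ applied with $x=l_1$ and $x=l_2$; combined with the $\frac{\lambda}{(\lambda-1)^2}$ contribution inherited from Theorem~\ref{th1}, this yields the stated formula.
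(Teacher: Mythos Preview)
Your proof is correct and follows the same route as the paper: apply Theorem~\ref{th1} with $k=3$, use the description of the Ap\'ery set as the rectangle $\{bx+cy:0\le x<s,\ 0\le y<r\}$ to factor the sums $S_0$ and $S_1$, evaluate them via geometric series, and simplify. The one notable difference is that you actually \emph{prove} the structural lemma (that $a=rs$ and that the rectangle is exactly the Ap\'ery set), whereas the paper simply asserts it; your argument for $a/r\mid c$ and for the injectivity of $(j,k)\mapsto jb+kc\pmod a$ via $\langle b\rangle\cap\langle c\rangle=\{0\}$ is clean and fills a gap the paper leaves to the reader (or to \cite{tr17b}).
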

\begin{proof}
We use Theorem \ref{th1} with $k=3$. First observe 
$$
\{m_i|0\le i\le a-1\}=\{b x+c y|0\le x\le s-1,\,0\le y\le r-1\}  
$$ 
(\cite[Theorem 8]{tr17b}).  
We have 
\begin{align*}
&s^{(\lambda)}(a,b,c)\\
&=\frac{1}{\lambda^{a}-1}\sum_{i=0}^{a-1}m_i\lambda^{m_i}
-\frac{a\lambda^{a}}{(\lambda^{a}-1)^2}\sum_{i=0}^{a-1}\lambda^{m_i}+\frac{\lambda}{(\lambda-1)^2}\\
&=\frac{1}{\lambda^{a}-1}\sum_{x=0}^{s-1}\sum_{y=0}^{r-1}(b x+c y)\lambda^{b x+c y}
-\frac{a\lambda^{a}}{(\lambda^{a}-1)^2}\sum_{x=0}^{s-1}\sum_{y=0}^{r-1}\lambda^{b x+c y}\\
&\quad +\frac{\lambda}{(\lambda-1)^2}\\
&=\frac{1}{\lambda^{a}-1}\left(b\sum_{x=0}^{s-1}x\lambda^{b x}\sum_{y=0}^{r-1}\lambda^{c y}+c\sum_{x=0}^{s-1}\lambda^{b x}\sum_{y=0}^{r-1}y\lambda^{c y}\right)\\
&\quad-\frac{a\lambda^{a}}{(\lambda^{a}-1)^2}\sum_{x=0}^{s-1}\lambda^{b x}\sum_{y=0}^{r-1}\lambda^{c y}
 +\frac{\lambda}{(\lambda-1)^2}\\
&=\frac{b}{\lambda^{a}-1}\left(\frac{(s-1)\lambda^{b s}}{\lambda^b-1}-\frac{\lambda^{b s}-\lambda^b}{(\lambda^b-1)^2}\right)\frac{\lambda^{c r}-1}{\lambda^c-1}\\
&\quad +\frac{c}{\lambda^{a}-1}\frac{\lambda^{b s}-1}{\lambda^b-1}\left(\frac{(r-1)\lambda^{c r}}{\lambda^c-1}-\frac{\lambda^{c r}-\lambda^c}{(\lambda^c-1)^2}\right)\\
&\quad-\frac{a\lambda^{a}}{(\lambda^{a}-1)^2}\frac{\lambda^{b s}-1}{\lambda^b-1}\frac{\lambda^{c r}-1}{\lambda^c-1}
 +\frac{\lambda}{(\lambda-1)^2}\\
&=\frac{b s(\lambda^{c r}-1)+c r(\lambda^{b s}-1)+\bigl(b(s-1)+c(r-1)-a\bigr)(\lambda^{b s}-1)(\lambda^{c r}-1)}{(\lambda^a-1)(\lambda^b-1)(\lambda^c-1)}\\
&\quad -\frac{(\lambda^{b s}-1)(\lambda^{c r}-1)}{(\lambda^a-1)(\lambda^b-1)(\lambda^c-1)}\left(\frac{a}{\lambda^a-1}+\frac{b}{\lambda^b-1}+\frac{c}{\lambda^c-1}\right) +\frac{\lambda}{(\lambda-1)^2}\,. {\atop\qed} 
\end{align*}
\end{proof}  
\bigskip

Because of the conditions $\gcd(a,b,c)=1$ and $a|{\rm lcm}(b,c)$, we cannot have $\zeta^a\ne 1$ and $\zeta^b=\zeta^c=1$.  
Assume that $\zeta^a\ne 1$, $\zeta^b\ne 1$ and $\zeta^c=1$. Then by Theorem \ref{th1}, we get  
\begin{align*}  
&s^{(\lambda)}(a,b,c)\\
&=\frac{1}{\lambda^{a}-1}\sum_{i=0}^{a-1}m_i\lambda^{m_i}
-\frac{a\lambda^{a}}{(\lambda^{a}-1)^2}\sum_{i=0}^{a-1}\lambda^{m_i}+\frac{\lambda}{(\lambda-1)^2}\\
&=\frac{1}{\lambda^{a}-1}\left(b\sum_{x=0}^{s-1}x\lambda^{b x}\cdot r+c\sum_{x=0}^{s-1}\lambda^{b x}\cdot\frac{r(r-1)}{2}\right)\\
&\quad -\frac{a\lambda^{a}}{(\lambda^{a}-1)^2}\sum_{x=0}^{s-1}\lambda^{b x}\cdot r+\frac{\lambda}{(\lambda-1)^2}\\
&=\frac{b r}{\lambda^a-1}\left(\frac{(s-1)\lambda^{b s}}{\lambda^b-1}-\frac{\lambda^{b s}-\lambda^b}{(\lambda^b-1)^2}\right)+\frac{c}{\lambda^a-1}\frac{\lambda^{b s}-1}{\lambda^b-1}\frac{r(r-1)}{2}\\
&\quad-\frac{a r\lambda^a}{(\lambda^a-1)^2}\frac{\lambda^{b s}-1}{\lambda^b-1}+\frac{\lambda}{(\lambda-1)^2}\\
&=\frac{r(\lambda^{b s}-1)}{(\lambda^a-1)(\lambda^b-1)}\left(b(s-1)+\frac{c(r-1)}{2}-a-\frac{a}{\lambda^a-1}-\frac{b}{\lambda^b-1}\right)\\
&\quad +\frac{b r s}{(\lambda^a-1)(\lambda^b-1)}+\frac{\lambda}{(\lambda-1)^2}\,.
\end{align*}
Hence, we obtain the following.

\begin{theorem}  
Assume that positive integers $a,b,c$ satisfy $\gcd(a,b,c)=1$ and $a|{\rm lcm}(b,c)$.  
If $\lambda\ne 1$ with $\lambda^{a}\ne1$, $\lambda^{b}\ne1$ and $\lambda^{c}=1$, then 
\begin{align*}
&s^{(\lambda)}(a,b,c)\\
&=\frac{l_2(\lambda^{l_1}-1)}{c(\lambda^a-1)(\lambda^b-1)}\left(l_1+\frac{l_2}{2}-a-b-\frac{c}{2}-\frac{a}{\lambda^a-1}-\frac{b}{\lambda^b-1}\right)\\ 
&\quad +\frac{l_1 l_2}{c(\lambda^a-1)(\lambda^b-1)}+\frac{\lambda}{(\lambda-1)^2}\,. 
\end{align*}
\label{th4} 
\end{theorem}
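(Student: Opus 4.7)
The plan is to specialize Theorem~\ref{th1} to the case $k=3$ and then exploit the additional condition $\lambda^c=1$ to collapse one of the two sums that appear after indexing the residues modulo $a$. More precisely, I will start from
$$
s^{(\lambda)}(a,b,c)=\frac{1}{\lambda^{a}-1}\sum_{i=0}^{a-1}m_i\lambda^{m_i}-\frac{a\lambda^{a}}{(\lambda^{a}-1)^2}\sum_{i=0}^{a-1}\lambda^{m_i}+\frac{\lambda}{(\lambda-1)^2},
$$
which is valid since $\lambda^a\neq 1$, and use the bijection
$$
\{m_i:0\le i\le a-1\}=\{b x+c y:0\le x\le s-1,\ 0\le y\le r-1\}
$$
granted by the hypothesis $a\mid{\rm lcm}(b,c)$ (and the identity $rs=a$ that accompanies it). This is exactly the same starting point as in the proof of Theorem~\ref{th3}; the only change is which of the geometric sums in $x$ or $y$ degenerates.

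Next I would split each of the sums $\sum_i m_i\lambda^{m_i}$ and $\sum_i\lambda^{m_i}$ as a product of an $x$-sum and a $y$-sum. Because $\lambda^c=1$, every factor $\lambda^{cy}$ equals $1$, so $\sum_{y=0}^{r-1}\lambda^{cy}=r$ and $\sum_{y=0}^{r-1}y\lambda^{cy}=r(r-1)/2$. This is the key simplification: the $y$-side contributes only combinatorial factors, and only the $x$-sums retain the analytic dependence on $\lambda$. Since $\lambda^b\ne 1$, the remaining $x$-sums can be evaluated by the standard geometric formulas
$$
\sum_{x=0}^{s-1}\lambda^{bx}=\frac{\lambda^{bs}-1}{\lambda^b-1},\qquad \sum_{x=0}^{s-1}x\lambda^{bx}=\frac{(s-1)\lambda^{bs}}{\lambda^b-1}-\frac{\lambda^{bs}-\lambda^b}{(\lambda^b-1)^2}.
$$

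Then I plug everything back, translate $bs\to l_1$, $cr\to l_2$ (and use $rs=a$, so that factors of $r$ become $l_2/c$ and dividing by $c$ produces the $c$ in the denominator of the final formula), and group the three resulting terms. The piece $\lambda/(\lambda-1)^2$ is simply carried along unchanged. Finally I gather the coefficient of $(\lambda^{l_1}-1)/\bigl((\lambda^a-1)(\lambda^b-1)\bigr)$ by combining the constant contributions $b(s-1)+c(r-1)/2-a$, rewriting them in terms of $l_1,l_2,a,b,c$ as $l_1+l_2/2-a-b-c/2$, and isolating the additional $-a/(\lambda^a-1)-b/(\lambda^b-1)$ that comes from the $-a\lambda^a/(\lambda^a-1)^2$ term and from the second piece of $\sum x\lambda^{bx}$.

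The main obstacle is purely bookkeeping: making sure the algebraic reshuffling packages the five separate fragments (one from $\sum m_i\lambda^{m_i}$ with its two subfragments, one from $\sum\lambda^{m_i}$, and the free $\lambda/(\lambda-1)^2$) into the compact closed form stated in the theorem, in particular that all of the dependence in the first group is captured by the single factor $l_2(\lambda^{l_1}-1)/\bigl(c(\lambda^a-1)(\lambda^b-1)\bigr)$ and the leftover $l_1l_2/\bigl(c(\lambda^a-1)(\lambda^b-1)\bigr)$. No new analytic input is required beyond Theorem~\ref{th1} and the parametrization of the $m_i$.
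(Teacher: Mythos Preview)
Your proposal is correct and follows essentially the same route as the paper: start from Theorem~\ref{th1}, use the parametrization $m_i=bx+cy$ with $0\le x\le s-1$, $0\le y\le r-1$, collapse the $y$-sums to $r$ and $r(r-1)/2$ via $\lambda^c=1$, evaluate the $x$-sums geometrically, and regroup using $bs=l_1$, $cr=l_2$, $r=l_2/c$. The algebraic repackaging you describe (combining $b(s-1)+c(r-1)/2-a$ into $l_1+l_2/2-a-b-c/2$, and splitting off the $-a/(\lambda^a-1)-b/(\lambda^b-1)$ term and the residual $brs=l_1l_2/c$) matches the paper's computation line by line.
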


\section{Acknowledgement} 

The authors would like to thank the referee for the useful and detailed comments and suggestions, which made this paper more attractive and more sophisticated.

\end{document}